\newcommand \N{\mathbb{N}}
\newcommand \Z{\mathbb{Z}}
\newcommand \R{\mathbb{R}}
\newcommand \F{\mathcal{F}}
\newcommand \U{\mathcal{U}}
\newcommand{\Lip}{\mathrm{Lip}}
\newcommand{\diam}{\mathrm{diam}}
\newcommand{\lto}{\longrightarrow}
\newtheorem{prop}{Proposition}[section]
\newtheorem{teo}[prop]{Theorem}
\newtheorem{ct}[prop]{Counterexample}
\newtheorem{cor}[prop]{Corollary}
\newtheorem{defn}[prop]{Definition}
\theoremstyle{definition}
\newtheorem{ejem}{Example}
\theoremstyle{remark}
\begin{document}

\begin{frontmatter}

%% Title, authors and addresses

%% use the tnoteref command within \title for footnotes;
%% use the tnotetext command for the associated footnote;
%% use the fnref command within \author or \address for footnotes;
%% use the fntext command for the associated footnote;
%% use the corref command within \author for corresponding author footnotes;
%% use the cortext command for the associated footnote;
%% use the ead command for the email address,
%% and the form \ead[url] for the home page:
%%
%% \title{Title\tnoteref{label1}}
%% \tnotetext[label1]{}
%% \author{Name\corref{cor1}\fnref{label2}}
%% \ead{fmm124@ual.es}
%% \ead[url]{home page}
%% \fntext[label2]{}
%% \cortext[cor1]{}
%% \address{Address\fnref{label3}}
%% \fntext[label3]{}

\title{Counterexamples for IFS-attractors}
\author{Magdalena Nowak\fnref{fn1}}
\ead{magdalena.nowak805@gmail.com}
\address{Jan Kochanowski University in Kielce, \'Swietokrzyska 15, 25-406 Kielce, POLAND}
\fntext[fn1]{The first author was partially supported by National Science Centre grant DEC-2012/07/N/ST1/03551}

\author{M. Fern\'andez-Mart\'{\i}nez\fnref{fn2}}
\ead{fmm124@gmail.com}
\address{University Centre of Defence at the Spanish Air Force Academy, MDE-UPCT,\\ 30720 Santiago de la Ribera, Murcia, SPAIN}
\fntext[fn2]{The second author specially acknowledges the valuable support
provided by Centro Universitario de la Defensa en la Academia General del
Aire de San Javier (Murcia, Spain).}

%% use optional labels to link authors explicitly to addresses:
%% \author[label1,label2]{<author name>}
%% \address[label1]{<address>}
%% \address[label2]{<address>}

%\author{M. Fern\'andez-Mart\'{\i}nez and M.A. S\'anchez-Granero}
%\ead{fmm124@ual.es and misanche@ual.es}
%\address{Area of Geometry and Topology \\ Faculty of Science \\
%Universidad de Almer\'{\i}a \04120 Almer\'{\i}a \\ Spain}

%\title{*}

%\author[rvt]{M. Fern\'andez-Mart\'{\i}nez}
%\ead{fmm124@gmail.com}
%\author[rvt1]{M.A. S\'anchez-Granero\corref{cor1}\fnref{fn3}}
%\ead{misanche@ual.es}
%\author[rvt2]{J.E. Trinidad Segovia}

%\cortext[cor1]{Corresponding author}
%%\cortext[cor2]{Principal corresponding author}
%%\fntext[fn2]{The first author acknowledges a FPU grant of the Spanish Ministry of Education.}
%\fntext[fn3]{The second author acknowledges the support of the Ministry of Economy and Competitiveness
%of Spain, Grant MTM2012-37894-C02-01.}
%
%\address[rvt]{University Centre of Defence at the Spanish Air Force Academy, MDE-UPCT,\\ 30720 Santiago de la Ribera, Murcia (Spain)}
%
%\address[rvt1]{Department of Mathematics, Universidad de Almer\'{\i}a,\\ 04120 Almer\'{\i}a (Spain)}
%
%\address[rvt2]{Department of Accounting and Finance, Universidad de Almer\'{\i}a,\\ 04120 Almer\'{\i}a (Spain)}

\begin{abstract}
In this paper, we deal with the part of Fractal Theory related to finite families of (weak) contractions, called iterated function systems (IFS, herein). An attractor is a compact set which remains invariant for such a family. Thus, we consider spaces homeomorphic to attractors of either IFS or weak IFS, as well, which we will refer to as Banach and topological fractals, respectively. We present a collection of counterexamples in order to show that all the presented definitions are essential, though they are not equivalent in general.
\end{abstract}

\begin{keyword}
Fractal \sep iterated function system \sep self-similar set \sep Banach fractal \sep topological fractal \sep contraction

\MSC[2000] Primary 28A80; 54D05; 54F50; 54F45 

\end{keyword}

\end{frontmatter}
%[******Revisar MSC]
\section{Introduction}
Since the rise of Fractal Theory started with the first works of pioneer Mandelbrot \cite{MAN77,MAN82}, the relevance of fractals in a wide range of scientific areas has increased during the last years. It is worth mentioning, in this occassion, the connection among fractals and dynamical systems. Indeed, it is well known and established that fractal dimension provides a useful measure about the chaotic behavior of a given dynamical system. This bas been carried out classically through the box dimension (see, e.g., \cite[Subsection 8.4]{ROB95}), mainly due to the easiness when being calculated or estimated in empirical applications, though the Hausdorff dimension, which is the oldest and also the most accurate (by definition) model for fractal dimension has been considered, too, at least from a theoretical point of view. Recently, novel alternatives, with some desirable analytical properties as it happens with the Hausdorff dimension model, and being as easy to calculate as the box dimension, have been contributed. In particular, a new fractal dimension algorithm, specially appropriate to deal with time series, has allowed the authors to study the chaotic behavior of planar oscillations described by a satellite in the field of astrodynamics, based on a Beletsky model \cite{NDY15}. Moreover, also the Hurst exponent (resp. the self-similarity exponent) turns into another serious candidate to tackle this task, and has been already explored as a chaos measure for dynamical systems \cite{TAR15}. They provide alternative chaos measurements within the advantage to be less computationally expensive than classical Lyapunov exponent.

Another interesting topic that fractal dimension allows to deal with is about the study of self-similar sets, which become a special kind of fractal sets which could be equipped in a quite natural way by a fractal structure (see \cite[Definition 4.4]{MSG12}). To calculate the fractal dimension of this kind of fractals through a explicit formula constitutes a worth-mentioning task, and this has been explored previously through both the box-counting and the Hausdorff fractal dimensions, though a certain restrictive hypothesis is required (see \cite{MOR46}). Similar results in this line have been also contributed through novel definitions of fractal dimension specially developed for any fractal structure (see, e.g., \cite[Theorem 4.19]{DIM1}, \cite[Theorems 4.19 \& 4.20, and Corollary 4.22]{DIM3}). In particular, one of such models allows to calculate the fractal dimension for self-similar sets (equipped with their natural fractal structure) with the open set condition not having to be satisfied (see \cite[Definition 4.2]{DIM3}).

In this paper, two notions for a self-similar set are explored. Thus, a space being homeomorphic to any attractor of an iterated function systems is called a Banach fractal, whereas a topological fractal basically consists of a space which is homeomorphic to a weak iterated function system (namely, such an IFS is constructed through a finite set of weak contractive mappings). The main goal in the present work is to state that these concepts, though essential, are not equivalent in general. This is carried out through a collection of appropriately constructed counterexamples, as can be seen in forthcoming Subsections \ref{sub:1} \& \ref{sub:2}.

\section{Banach and topological fractals}
In this section, some concepts regarding (weak) IFS-attractors, as well as Banach and topological fractals, are firsly recalled and theoretically connected. Moreover, in Subsection \ref{sub:1}, three appropriately chosen counterexamples are provided in order to show that the reciprocal links are not true, in general. On the other hand, Subsection \ref{sub:2} allow the authors to point out that though no Peano continuum not being a topological fractal is still known, scattered spaces do provide a topological context where many examples of this could be found. This is illustrated through a standard construction involving the Cantor-Bendixson derivative.

\begin{defn}
For a metric space $(X,d)$, let us define an IFS as a finite family $\F$ of contractive self-maps (with all the Lipschitz constants, denoted by $\Lip$, being $<1$). The unique compact set $A\subset X$ which satisfies $A=\bigcup_{f\in\F}f(A)$ is called the attractor of the IFS $\F$ or IFS-attractor.
\end{defn}

It is a standard fact from Fractal Theory that there exists an attractor for every IFS on a complete metric space $X$  (see, e.g., \cite{Barnsley}).  It is a consequence of Banach contraction principle. Namely, the iterated function system $\F$ generates a contractive operator $\F(K)=\bigcup_{f\in \F}f(K)$ acting on a (complete) space of nonempty, compact subsets of $X$ with the Hausdorff metric.  

We can even consider a weaker version for the notion of IFS.

\begin{defn}
By weak IFS on a metric space $(X,d)$, we understand a finite family $\F$ of self-maps which are weak contractions, namely, it is satisfied that for every distinct points $x,y\in X$, and for all $f\in\F$, it holds that $d(f(x) ,f(y))<d(x,y)$. A compact set $A$ such that $A=\bigcup_{f\in\F}f(A)$ for such a family $\F$, will be named weak IFS-attractor. 
\end{defn}

It is worth mentioning that in \cite[Remark 7.1]{Edelstein}, it was proved that whether the space $X$ is compact, then there exists the attractor for every weak IFS on $X$. In this paper, though,
we will also consider spaces homeomorphic to attractors for (weak) IFS. These concepts are stated next.
 
\begin{defn}
A Banach fractal is a compact metrizable space which is homeomorphic to the attractor of an IFS on a complete metric space. 
\end{defn}

We also present the notion of topological fractal (attractor of the topological IFS), considered in \cite{BanakhNowak,mihail}, which is homeomorphic to the attractor of a weak IFS on a compact metric space. 

\begin{defn}
A Hausdorff topological space $X$ is called a topological fractal provided that $X=\bigcup_{f\in\F}f(X)$, where $\F$ is a finite family of continuous self-maps such that for every open cover $\U$ of the space $X$, there exists $n\in\N$ such that for any maps $f_1,\dots,f_n\in\F$, the set $f_1\circ\ldots\circ f_n(X)$ contains in some set $U\in\U$.
\end{defn}
Note that every compact metric space $X$ is a topological IFS-attractor whether for any of its open covers $\U$, the diameter of the set $f_1\circ\ldots\circ f_n(X)$ is less than the Lebesgue number of $\U$, for some $n\in\N$, and every $f_1,\dots,f_n\in\F$.  Recall that the Lebesgue number $\lambda$ for an open cover $\U$ of a compact metric space $X$, is a positive real number, such that every subset of $X$ whose diameter is less than $\lambda$, is contained in some element of $\U$.

Surprisingly, it holds that a space is a topological fractal if and only if it is metrizable and homeomorphic to the attractor of some weak IFS. This result was recently proved in \cite[Corollary 6.4]{BKNNS}.

Regarding the present work, next we will show that all the presented definitions, while essential, are not equivalent in general. Indeed, it is easy to check the connections displayed in the following diagram:
$$\xymatrix{
\mbox{IFS-attractor}\ar@{=>}[r]\ar@{=>}[d]&\mbox{weak IFS-attractor}\ar@{=>}[d]\\
\mbox{Banach fractal}\ar@{=>}[r]&\mbox{topological fractal}\\
}$$
though the inverse implications do not hold. We deal with in the upcoming subsections.

\subsection{Collection of counterexamples}\label{sub:1}
First of all, we introduce a version of the so-called \emph{harmonic spiral} space (see \cite{Sanders}), which allows us to show that neither weak IFS-attractor nor Banach fractal imply IFS-attractor, in general.

\begin{ct}
There exists an arc called ``the snake'', which is both a weak IFS-attractor and Banach fractal but not IFS-attractor. 
\end{ct}

\begin{proof}
Firstly, to provide a description for the snake, let us switch to standard polar coordinates $(r,\alpha)$ on $\R^2$, that is $(x,y)=(r\cos\alpha,r\sin\alpha)$. Thus, the snake consists of circular sectors
$$O_n = \Big\{\Big(\frac1n,\alpha\Big)\in\R^2\colon\alpha\in\Big(\frac{\pi}2,2\pi\Big)\Big\},$$
and intervals
$$I_n =\Bigg\{(r,\alpha)\in\R^2: r\in\Bigg[\frac1{n+1},\frac1n\Bigg], \alpha= n\bmod{2}\cdot\frac{\pi}{2}\Bigg\},$$
for $n\geq1$. Hence, the snake is defined as $S = \bigcup_{n=1}^\infty (O_n\cup I_n) \cup \{(0,0)\}$.

\begin{figure}[h]
\centering
\includegraphics[width=250px]{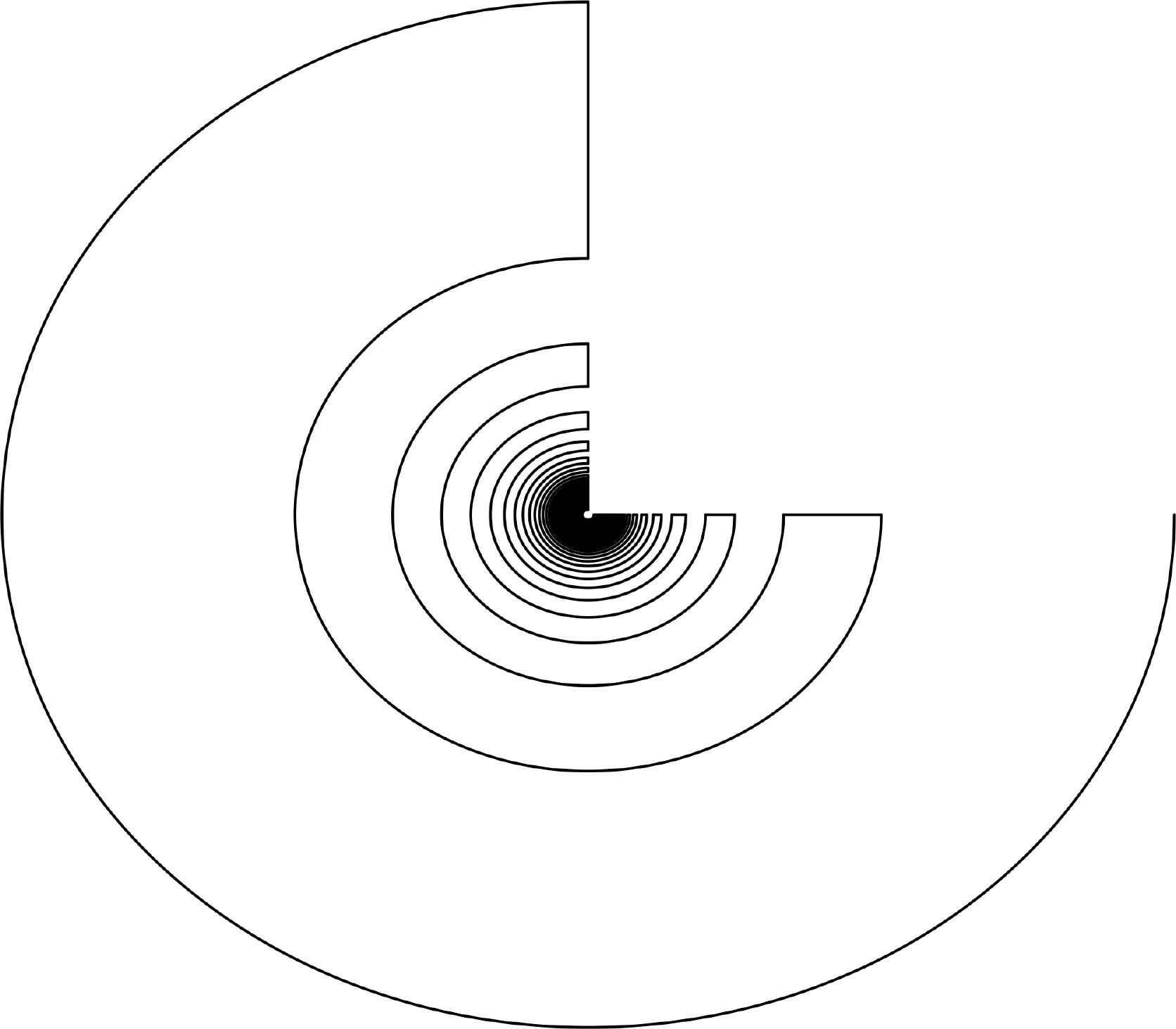}
\caption{The arc $S$, named \emph{the snake}.}
\end{figure}
This is an arc, namely, a space homeomorphic to the closed unit interval. Since $[0,1]$ is a standard IFS-attractor, the snake becomes a Banach fractal. On the other hand, $S$ has infinite length and satisfies the following theorem proved by Sanders in \cite[Theorem 4.1]{Sanders}: 

\begin{teo}\label{thm_Sanders}
An arc $A\subset\R^n$ is not an IFS-attractor if for one of its endpoints $a \in A$, the following conditions are satisfied:
\begin{itemize}
\item for all $x, y \in A\setminus\{a\}$, the length of the subarc of $A$ with endpoints $x$ and $y$ is finite, and
\item for every $x\in A\setminus\{a\}$, the length of the subarc of $A$ with endpoints $x$ and $a$ is infinite.
\end{itemize}
\end{teo}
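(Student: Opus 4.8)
The plan is to argue by contradiction. Suppose the arc $A\subseteq\R^n$ is the attractor of an IFS $\F=\{f_1,\dots,f_m\}$, so that $A=\bigcup_{i=1}^m f_i(A)$ with $\Lip(f_i)\le c<1$ for every $i$; then any $k$-fold composite of the $f_i$ is Lipschitz with constant $\le c^k$. Fix a homeomorphism $\phi\colon[0,1]\to A$ with $\phi(0)=a$. I would first record two preliminary facts. (a) Every subcontinuum of $A$ — in particular every image $f_{i_1}\circ\cdots\circ f_{i_k}(A)$ — is a point or a subarc $\phi([u,v])$ with $u\le v$; and by the two hypotheses on $a$, such a subarc has \emph{finite} length exactly when it misses $a$ (that is, $u>0$) or is degenerate, and \emph{infinite} length exactly when $u=0<v$. (b) Interpreting ``length'' as $1$-dimensional Hausdorff measure $\mathcal H^1$ (which agrees with rectifiable arclength), lengths are monotone under inclusion and satisfy $\mathcal H^1(g(E))\le c'\,\mathcal H^1(E)$ whenever $g$ is $c'$-Lipschitz. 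In particular $A$ itself has infinite length (hypothesis two applied to the other endpoint $\phi(1)$) and $\diam A<\infty$.

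Next I would build a nested sequence of ``bad'' initial subarcs. Since $A=\bigcup_i f_i(A)$ is a \emph{finite} union and $A$ has infinite length, some $f_{j_1}(A)=:g_1(A)$ has infinite length; inductively, if $g_{k-1}:=f_{j_1}\circ\cdots\circ f_{j_{k-1}}$ has $g_{k-1}(A)$ of infinite length, then $g_{k-1}(A)=\bigcup_i g_{k-1}(f_i(A))$ forces $g_k(A):=g_{k-1}(f_{j_k}(A))$ to have infinite length for some index $j_k$. Two consequences: $g_k(A)\subseteq g_{k-1}(A)$, so writing $g_k(A)=\phi([0,\tau_k])$ the numbers $\tau_k$ are nonincreasing, with $\tau_k\to 0$ because $\diam g_k(A)\le c^k\diam A\to 0$ and $\phi$ is a homeomorphism; and, since $g_{k-1}$ is Lipschitz and cannot turn a finite length into an infinite one, $f_{j_k}(A)$ \emph{itself} has infinite length, so $f_{j_k}(A)=\phi([0,s_k])$ with $s_k\in(0,1)$. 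Putting $\tau_0:=1$ and $C_k:=\overline{g_{k-1}(A)\setminus g_k(A)}=\phi([\tau_k,\tau_{k-1}])$, the arcs $C_k$ cover $A\setminus\{a\}$ (since $\tau_k\downarrow 0$), overlapping only in single points, so $\mathcal H^1(A)=\sum_{k\ge 1}\mathcal H^1(C_k)$.

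The crux is to estimate $\mathcal H^1(C_k)$ geometrically. From $g_k(A)=g_{k-1}(f_{j_k}(A))=g_{k-1}(\phi([0,s_k]))$, every point of $g_{k-1}(A)\setminus g_k(A)$ has the form $g_{k-1}(\phi(\theta))$ with $\theta>s_k$; hence $C_k\subseteq g_{k-1}(\phi([s_k,1]))$, and therefore $\mathcal H^1(C_k)\le c^{k-1}\,\mathcal H^1(\phi([s_k,1]))$. Now the decisive observation: the parameters $s_k$ take only finitely many values, namely those $v\in(0,1)$ for which $f_i(A)=\phi([0,v])$ for some $i$; since there are finitely many maps this value set is finite, it is nonempty (it contains the $v$ coming from $f_{j_1}$), and all its elements are positive. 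Let $s_*>0$ be its minimum and $\Lambda_*:=\mathcal H^1(\phi([s_*,1]))<\infty$. Then $\mathcal H^1(\phi([s_k,1]))\le\Lambda_*$ for every $k$, whence
$$\mathcal H^1(A)=\sum_{k\ge 1}\mathcal H^1(C_k)\le\sum_{k\ge 1}c^{k-1}\Lambda_*=\frac{\Lambda_*}{1-c}<\infty,$$
contradicting that $A$ has infinite length. Hence $A$ is not an IFS-attractor.

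I expect the main obstacle to be exactly the step that makes this series converge. A naive ``peel off a contracted copy and iterate'' argument fails, because a contraction does not shrink an \emph{infinite} length at all; the remedy is to peel off, at each stage, the initial piece containing $a$ and to control instead the \emph{complementary}, finite-length arcs $C_k$ — and the only reason $\sum_k c^{k-1}\mathcal H^1(\phi([s_k,1]))$ is finite is that the $s_k$ stay bounded away from $0$, being drawn from the finite list of ``first-piece endpoints'' of the fixed maps $f_1,\dots,f_m$ (otherwise $\phi([s_k,1])$ could swell toward the whole infinite-length arc). A secondary point requiring care is to resist assuming the $f_i$ are injective on $A$, or that $A=\bigcup_i f_i(A)$ is a disjoint union: all of the above must run through monotonicity and the Lipschitz scaling bound for $\mathcal H^1$. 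Finally, it is worth noting that strict contractivity ($\Lip<1$), not mere weak contractivity, is what powers the geometric series — in line with the fact that the snake is a weak IFS-attractor.
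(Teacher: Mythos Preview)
Your argument is correct. The paper itself does not supply a proof of this theorem at all: it quotes the statement verbatim and attributes it to Sanders \cite[Theorem~4.1]{Sanders}, using it only as a black box to conclude that the snake is not an IFS-attractor. So there is no ``paper's own proof'' to compare against.

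For what it is worth, your proof follows the natural line (and, in outline, Sanders' original one): iteratively select a composition $g_k=f_{j_1}\circ\cdots\circ f_{j_k}$ whose image has infinite length, note that each such image is an initial subarc $\phi([0,\tau_k])$ anchored at the bad endpoint $a$, and bound the lengths of the complementary pieces $\phi([\tau_k,\tau_{k-1}])$ by $c^{k-1}$ times a fixed finite length. The one genuinely delicate point---why the ``tail'' lengths $\mathcal H^1(\phi([s_k,1]))$ do not blow up---you handle cleanly by observing that $s_k$ ranges over the finite set $\{v:f_i(A)=\phi([0,v])\text{ for some }i\}$, hence is bounded below by some $s_*>0$. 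Your use of $\mathcal H^1$ in place of the partition-supremum definition of arclength is harmless here (they coincide on arcs) and makes the monotonicity and Lipschitz-scaling steps transparent; just be aware that the paper (and Sanders) phrases everything in terms of the partition definition.
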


Let us clarify the notions consisting of endpoints as well as the length of an arc. 
Indeed, take an arc $A=e([0,1])$, where $e\colon[0,1]\lto\R^n$ is an embedding. The {\em endpoints} of the arc $A=e([0,1])$, are the points $a=e(0)$ and $b=e(1)$. Furtherm, a {\em partition} of the interval $[0,1]$ is a finite sequence $(x_i)_{i=0}^{k}$, such that 
$0=x_0 < x_1 < \ldots< x_k=1$.
Thus, the {\em length} of the arc $A=e([0,1])$ within endpoints $a$ and $b$ is defined by
$$\mathcal{L}_a^b(A)= \sup\Bigg\{\sum_{i=1}^k d(e(x_{i-1}),e(x_i)): (x_i)_{i=0}^{k} \text{ is a partition of } [0,1]\Bigg\},$$
where $d$ refers to the standard Euclidean distance in $\R^n$. Note that the length is independent of the choice of the embedding $e$.  

Note that Theorem \ref{thm_Sanders} means that the space $S$ is not an attractor for any iterated function system.
Accordingly, to conclude the proof, we will show that the snake becomes an attractor for the weak IFS $\{f,g_1,\dots,g_m\}$, where the functions $g_i\colon S\lto S$ do project the snake onto finite length intervals, which allow to cover parts of the space. We can even choose the functions $g_1,\dots,g_m$ being contractions, as well as in \cite[Theorem 3.1]{Sanders}. Moreover, the function $f$ has to fill the remaining part of the snake, which has infinite length. Note that it scales down the modulus of points, namely
$$f(r,\alpha)=(\tilde f(r),\alpha),$$ where $\tilde f(r)$ is defined as follows: 

\begin{equation*}
\tilde f(r)=
\begin{cases} 
0 &\mbox{if } r=0; \\ 
\frac{rn(n+1)+2}{(n+2)(n+3)} & \mbox{if } r\in [\frac1{n+1},\frac1n]. 
\end{cases}
\end{equation*} 
In fact, note that whether $r\in [\frac1{n+1},\frac1n]$, then there exists $t\in[0,1]$ such that $r=\frac1{n+1}+t\,(\frac1n -\frac1{n+1})$. Hence, 
$$\tilde f(r)=\frac1{n+3}+t\,\Big(\frac1{n+2}-\frac1{n+3}\Big)= \frac{rn(n+1)+2}{(n+2)(n+3)}.$$
In addition to that, note that $f(O_n\cup I_n) = O_{n+2}\cup I_{n+2}$, for each $n\geq 1$, so the set $f(S)$ covers $S\setminus(O_1\cup I_1\cup O_2\cup I_2)$, and $\bigcup_{i=1}^m g_i(S)$ covers $O_1\cup I_1\cup O_2\cup I_2$. Hence, the snake becomes the attractor of the system $\{f,g_1,\dots,g_m\}$. For completeness, let us show that $f\colon S\lto S$ is a weak contraction. 

Indeed, it becomes straightforward to check the two next statements: 
\begin{enumerate}[(a)]
\item \label{obserwacja1} $r> \tilde f(r)$, for every $r\in(0,1]$;
\item \label{obserwacja2} $\tilde f\colon[0,1]\lto[0,1]$ is strictly decreasing (that is, $\tilde f(r)> \tilde f(p)$, whenever $r>p$). 
\end{enumerate}
Now, let $x=(r_x,\alpha_x)$ and $y=(r_y,\alpha_y)$ be any two fixed but arbitrarily chosen distinct points from $S$. Recall that our goal here is to prove that
\begin{equation}\label{eqExpectedResult}
d(x,y)>d(f(x),f(y)).
\tag{$\star$}
\end{equation}
To deal with, we will distinguish the two following cases:
\newline
\newline
\textbf{Case 1.} $r_x=0$ or $r_y=0$. Without loss of generality, let us suppose $r_y=0$. Then $r_x>0$ and by (\ref{obserwacja1}), we obtain  $d(x,y)=r_x > \tilde f(r_x) = d(f(x),f(y))$.
\newline
\newline
\textbf{Case 2.} $x\in O_n\cup I_n$ and $y\in O_k\cup I_k$, for some integers $n,k\geq 1$. We may assume that $r_x\geq r_y$. Therefore, $n\leq k$. From the cosine formula, we have that
$$d(x,y)=\sqrt{r_x^2 + r_y^2 - 2\, r_x r_y \cos(\alpha_x-\alpha_y)},$$
as well as
$$d(f(x),f(y))=\sqrt{\tilde f(r_x)^2 + \tilde f(r_y)^2 - 2\, \tilde f(r_x) \tilde f(r_y) \cos(\alpha_x-\alpha_y)}.$$
Thus, the difference between the squares of those distances satisfies the next expresion:
\begin{equation}
d(x,y)^2-d(f(x),f(y))^2\geq(r_x-r_y)^2-(\tilde f(r_x) - \tilde f(r_y))^2,
\tag{$\diamond$}\label{diament}
\end{equation}
since $\cos(\alpha_x-\alpha_y)\leq 1$.
Further, 
\begin{enumerate}[(i)]
\item if $r_x=r_y$, then $\alpha_x\neq\alpha_y$, so $\cos(\alpha_x-\alpha_y)< 1$. Consequently, the inequality (\ref{diament}) becomes strict, which implies that $d(x,y)^2-d(f(x),f(y))^2> 0$. This gives (\ref{eqExpectedResult}).
\item If $r_x>r_y$ and $n=k$, then 
$$(\tilde f(r_x) - \tilde f(r_y))^2 = \Bigg(\frac{(r_x-r_y)\, n\, (n+1)}{(n+2)(n+3)}\Bigg)^2<(r_x-r_y)^2,$$
so from (\ref{diament}), we again get (\ref{eqExpectedResult}).
\item Finally, if $r_x>r_y$ and $n<k$, then we get
\begin{align*}
\tilde f(r_x) - \tilde f(r_y) &=\frac1{n+3}+t_x\Big(\frac1{n+2}-\frac1{n+3}\Big)-\frac1{k+3}-t_y\Big(\frac1{k+2}-\frac1{k+3}\Big)\\
&=\frac1{n+3}-\frac1{k+2} + t_x\Big(\frac1{n+2}-\frac1{n+3}\Big) + (1-t_y)\Big(\frac1{k+2}-\frac1{k+3}\Big)\\
&<\frac1{n+3}-\frac1{k+2} + t_x\Big(\frac1{n}-\frac1{n+1}\Big) + (1-t_y)\Big(\frac1{k}-\frac1{k+1}\Big)\\
&\leq\frac1{n+1}-\frac1{k} + t_x\Big(\frac1{n}-\frac1{n+1}\Big) + (1-t_y)\Big(\frac1{k}-\frac1{k+1}\Big)\\
&=r_x-r_y.
\end{align*}
From (\ref{obserwacja2}), both sides of this inequality become positive, so
$$(r_x-r_y)^2 - (\tilde f(r_x) - \tilde f(r_y))^2>0,$$
and from (\ref{diament}), we get (\ref{eqExpectedResult}). 
\end{enumerate}

The previous arguments allow us to affirm that $f$ is a weak contraction, which completes the proof.
\end{proof}

The following counterexample leads to affirm that topological fractal does not imply Banach fractal, in general. 

\begin{ct}
There exists a Peano continuum called the ``shark teeth'' which is not a Banach fractal but it is a topological fractal.
\end{ct}

Recall that by a Peano continuum, we understand a continuous image of the closed unit interval $[0,1]$. The space presented next was firstly constructed in \cite{BanakhNowak}, and studied later in \cite{NS}.

\begin{proof}
Let us consider the following piecewise linear periodic function:
\begin{equation*}
\varphi(t)=
 \begin{cases}
t-n & \text{if }t\in[n,n+\frac{1}{2}] \text{ for some }n\in\Z;  \\
n-t & \text{if }t\in[n-\frac{1}{2},n] \text{ for some }n\in\Z,
\end{cases}
\end{equation*}
whose graph looks like as follows:

\begin{picture}(300,80)(-30,-10)
\put(-20,0){\vector(1,0){300}}
\put(275,-10){$t$}
\put(0,-10){\vector(0,1){70}}
\put(-20,20){\line(1,-1){20}}
\put(0,0){\line(1,1){40}}
\put(40,40){\line(1,-1){40}}
\put(80,0){\line(1,1){40}}
\put(120,40){\line(1,-1){40}}
\put(160,0){\line(1,1){40}}
\put(200,40){\line(1,-1){40}}
\put(240,0){\line(1,1){20}}
\end{picture}

Moreover, for every $n\in\N$, let us define the function
\begin{equation*}
\varphi_n(t)=2^{-n}\, \varphi(2^nt),
\end{equation*}
which is a homothetic copy of the function $\varphi(t)$.

On the other hand, {\em Shark teeth} type spaces were constructed in \cite{BT}. They are parame\-trized through an infinite non-decreasing sequence $(n_k)_{k=1}^\infty$. Thus, let $I=[0,1]\times\{0\}$ be the {\em bone} of the shark teeth, and for every $k\in\N$, let $M_k= \big\{\big(t,\frac{1}k\, \varphi_{n_k}(t)\big):t\in[0,1]\big\}$ be the $kth$ {\em row} of the teeth. The space called shark teeth is given by 
$$M =I\cup \bigcup_{k=1}^\infty M_k.$$
\begin{figure}[h]
\includegraphics[scale=0.65]{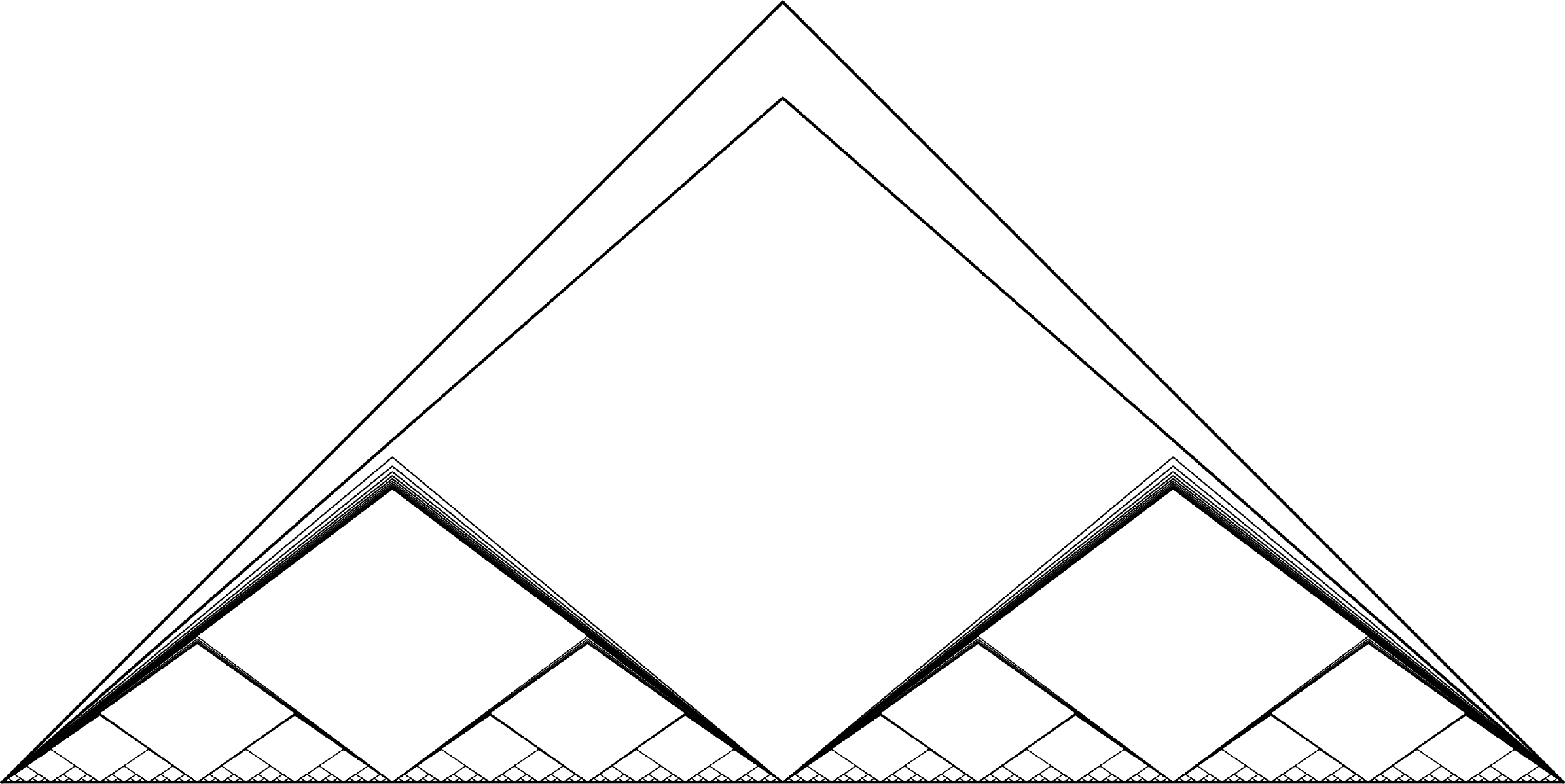}
\caption{The \emph{shark teeth}.}\label{shark}
\end{figure}
In \cite{BanakhNowak}, it was proved that the shark teeth constructed in the plane $\R^2$ through the non-decreasing sequence
\begin{equation*}
n_k=\lfloor \log_2 \log_2 (k+1)\rfloor,\quad k\in\N,
\end{equation*}
where $\lfloor \cdot \rfloor$ denotes the integer part, is not homeomorphic to any IFS-attractor. In other words, this is not a Banach fractal. However, it can be still shown that this is a topological fractal, which was proved in \cite{NS}, though we contribute a shorter proof next.

\begin{prop}\label{prop:1}
Each Peano continuum $P$ within a free arc (namely, a segment $L$ such that $L\cap\overline{P\setminus L}$ only consists of either one or two points) is a topological fractal. 
\end{prop}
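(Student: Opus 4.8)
The plan is to verify the definition of topological fractal directly, by producing a finite family $\F=\{f,g_1,\dots,g_m\}$ of continuous self-maps of $P$ with $P=\bigcup_{\varphi\in\F}\varphi(P)$ such that every sufficiently long composition from $\F$ has image inside a prescribed element of a given open cover; an equivalent route would be to construct a weak IFS whose attractor is homeomorphic to $P$ and then quote the characterisation recalled above (\cite[Corollary~6.4]{BKNNS}). First I would dispose of the degenerate case $P=L$: then $P$ is an arc, hence an IFS-attractor (as $[0,1]$ is) and in particular a topological fractal, so from now on assume $P\neq L$. Fix a homeomorphism $h\colon[0,1]\lto L$. Since every interior point of $L$ has a neighbourhood in $P$ lying inside $L$, the one or two points of $L\cap\overline{P\setminus L}$ are necessarily endpoints of $L$; relabel so that $a:=h(0)$ is one of them, and write $Q:=\overline{P\setminus L}$, so that $P=Q\cup L$ and $Q\cap L\subseteq\{h(0),h(1)\}$. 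The underlying idea is to use $L$ as a conduit: the auxiliary maps $g_1,\dots,g_m$ will be contractions covering a prescribed part of $P$ near the free arc (finitely many subarc-images suffice to cover an arc, as in the construction of \cite[Theorem~3.1]{Sanders}), while $f$ will reel the remaining part of $P$ — in particular all of $Q$ — along $L$ in such a way that the iterates $f^{\,n}(P)$ decrease to a single point.

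To build $f$ I would first invoke the Hahn--Mazurkiewicz theorem together with arcwise and local arcwise connectedness of $P$: this yields continuous surjections onto $P$ (and onto $Q$) from intervals, and one can prescribe the values of these surjections at the attaching point(s) of $L$. Composing such a surjection with a linear contraction of the parametrising interval, one obtains a continuous $f\colon P\lto P$ which acts on $L$ as a contraction towards the relevant endpoint, is defined on $Q$ so as to agree on $Q\cap L$ and to push $Q$ along $L$ while progressively decreasing the part of $P$ left over, and which satisfies $P\setminus f(P)\subseteq$ a proper subarc of $L$ together with $\diam\big(f^{\,n}(P)\big)\lto 0$. One then selects contractions $g_1,\dots,g_m$ covering $P\setminus f(P)$, so that with $\F=\{f,g_1,\dots,g_m\}$ one has $P=\bigcup_{\varphi\in\F}\varphi(P)$.

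It remains to verify the word-shrinking property. Given an open cover $\U$ with Lebesgue number $\lambda>0$, it suffices to find $n$ with $\diam\big(\varphi_1\circ\cdots\circ\varphi_n(P)\big)<\lambda$ for every choice of $\varphi_1,\dots,\varphi_n\in\F$. This follows by a short case analysis according to where the innermost occurrence of a $g_i$ sits in the word: if there is none, the image equals $f^{\,n}(P)$, whose diameter tends to $0$; otherwise the image of the inner block of $f$'s already has small diameter, applying the innermost $g_i$ confines everything to a fixed subarc of $L$, and each of the remaining maps contracts that arc by a fixed ratio $c<1$, so the diameter is multiplied by at most $c$ at each further step; combining the two estimates produces a single $n$ that works uniformly for all words of length $n$. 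The step I expect to be the genuine obstacle is the construction of $f$: one must make $f$ well defined and continuous across the one or two attaching points of $L$ — which is exactly where the hypothesis that $L\cap\overline{P\setminus L}$ consists of only one or two points is used — while simultaneously guaranteeing that the part of $P$ outside $L$ is truly reeled in, so that $f^{\,n}(P)$ collapses to a point rather than stabilising on (a copy of) $Q$, and still keeping the images of $\F$ covering $P$. Reconciling these three requirements is the technical core of the argument; the remaining verifications are routine.
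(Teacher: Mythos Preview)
There is a genuine inconsistency in your construction of $f$. You ask that (i) $f$ act on $L$ as a contraction towards an endpoint (so $f(L)\subseteq L$), (ii) $f$ ``push $Q$ along $L$'' (so $f(Q)\subseteq L$), and (iii) $P\setminus f(P)$ be contained in a proper subarc of $L$. But (i) and (ii) together force $f(P)\subseteq L$, and since $P\neq L$ we have $Q\setminus L\neq\emptyset$; hence $P\setminus f(P)\supseteq Q\setminus L$, contradicting (iii). In other words, your $g_i$'s only cover subarcs of $L$, so \emph{something} in $\F$ must map onto $Q$; yet your description of $f$ sends everything into $L$. The snake analogy breaks down precisely because the snake is an arc: there is no ``$Q$'' to hit. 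Even if you repair $f$ so that some portion of $L$ is wrapped onto $Q$ (which can be done, and can be arranged so that $\diam f^{n}(P)\to 0$), then $f$ is no longer a contraction on $L$, and your final word-shrinking argument---``each of the remaining maps contracts that arc by a fixed ratio $c<1$''---collapses, since applying $f$ to a subarc of $L$ may throw it back onto $Q$.

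The paper's proof avoids this trap by \emph{not} trying to cover $Q$ with a shrinking map. It splits $Q=P_1\cup P_2$ according to the one or two attaching points, picks Hahn--Mazurkiewicz surjections $\rho_j\colon[0,1]\to P_j$ with $\rho_j(0)=\rho_j(1)$ equal to the attaching point, and defines $G_j=\rho_j\circ\rho^{-1}$ on $L$, constant off $L$; three further maps $F_i$ act as tent-map contractions of ratio $2/3$ on $L$ and are constant off $L$. The key observation is that $G_j(P)=P_j$ meets $L$ in a single point, so $F_i\circ G_j(P)$ and $G_k\circ G_j(P)$ are singletons. Thus any word in which some $G_j$ is not outermost has image a point; if all inner letters are $F_i$'s the image has diameter at most $(2/3)^m\diam L$, and one final application of the uniformly continuous outermost map finishes. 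No single map has to simultaneously cover $Q$ and shrink---the covering maps $G_j$ are allowed to be ``large'', because any subsequent letter annihilates their image.
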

 
\begin{proof}
Recall that a Peano continuum $P$ is a continuous image of the unit interval. For the free arc $L$, there exists a continuous injective map $\rho:[0,1]\lto L$, which divides $P$ into three pieces: $P=P_1\, \cup L\, \cup P_2$, where $P_1\cap L=\{\rho(0)\}$, $P_2\cap L=\{\rho(1)\}$, and there also exist continuous functions $\rho_i:[0,1]\lto P_i$, for $i=1,2$.
We may assume that $\rho_i(0)=\rho_i(1)$, for $j\in J\setminus \{0\}$, where $J=\{0,1,2\}$. Further, let us consider the tent map:
\begin{equation*}
\varphi(x)=
 \begin{cases}
x & \text{if }x\in[0,\frac{1}{2}];  \\
1-x & \text{if }x\in[\frac{1}{2},1],
\end{cases}
\end{equation*}
as well as three contractions acting on $[0,1]$, given as follows. For all $i\in J$, let us define
$f_i(x) = \frac{i+2\, \varphi(x)}3$. Note that $\Lip f_i=\frac{2}3<1$, and $\bigcup_{i\in J}f_i([0,1])=\bigcup_{i\in J}[\frac{i}3,\frac{i+1}3]=[0,1]$.  

Next, we explain how to construct a finite family $\F=\{F_i, G_j\colon P\lto P: i\in J, j\in J\setminus \{0\}\}$, which consists of continuous functions for which $P$ is a topological fractal. In fact, for all $i\in J$, let
$$F_i(x)=
 \begin{cases}
\rho\circ f_i \circ \rho^{-1}(x) & \text{for }x\in L;  \\
\rho(\frac{i}3) & \text{for }x\notin L,
\end{cases}
$$
and for $j\in J\setminus \{0\}$, let us consider
$$G_j(x)=
 \begin{cases}
\rho_j\circ  \rho^{-1}(x) & \text{for }x\in L;  \\
\rho_j(0) & \text{for }x\notin L.
\end{cases}
$$
Hence, $\bigcup_{i\in J}F_i(P)=L$, and $G_j(P)=P_j$, for $j\in J\setminus \{0\}$. Moreover, $\Lip\, F_i=\frac{2}3<1$ for all $i\in J$, and it is also satisfied that both $G_1$ and $G_2$ are uniformly continuous (as a continuous function on a compact space). They allow to affirm that, for an arbitrary open cover $\U$ of Peano continuum $P$, and for its Lebesgue number $\varepsilon$, there exists $\delta>0$ such that for every $x,y\in P$, the condition $d(x,y)<\delta$ implies that $d(G_j(x),G_j(y))<\varepsilon$, where $d$ is a metric on $P$. Let us fix a natural number $m$ such that $(\frac{2}3)^m<\min(\delta,\varepsilon)$. Then for every $g_0,\dots,g_m\in\F$, the diameter of the set $g_0\circ\ldots\circ g_m(P)$ is less than $\varepsilon$.

Indeed, for every $i\in J$, and $j,k\in J\setminus \{0\}$, the image sets $F_i\circ G_j(P)$ and $G_j\circ G_k(P)$ are singletons. Thus, if $\{g_1,\dots,g_m\}\cap\{G_1,G_2\}\neq\emptyset$, then the set $g_0\circ\ldots\circ g_m(P)$ has diameter equal to $0$. Otherwise, $\{g_1,\dots,g_m\}\subset\{F_i:i\in J\}$, so $\diam(g_1\circ\ldots\circ g_m(P))\leq(\frac{2}3)^m<\min(\delta,\varepsilon)$. This implies that the diameter of the set $g_0\circ\ldots\circ g_m(P)$ is less than $\varepsilon$, and accordingly, $P$ is a topological fractal. 
\end{proof}

Proposition \ref{prop:1} generalizes the result obtained independently by D. Dumitru, who showed in \cite{Dumitru} that the union of a Peano continuum and a segment, such that their intersection is a singleton, becomes a topological fractal. Hence, the desired result yields as a consequence of former arguments:
\begin{cor}
The shark teeth is a topological fractal.
\end{cor}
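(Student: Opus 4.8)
The plan is to deduce the corollary directly from Proposition \ref{prop:1} by producing a free arc inside the shark teeth $M=I\cup\bigcup_{k=1}^\infty M_k$. First I would recall why $M$ is a Peano continuum: it is a bounded and closed (hence compact), connected subset of $\R^2$ — each row $M_k$ meets the bone $I$ at $(0,0)$, so the union is connected — and it is locally connected because the heights of the rows, namely $\max_t\frac1k\varphi_{n_k}(t)=\frac1{2k}\,2^{-n_k}$, tend to $0$; thus near any point of $I$ all but finitely many rows lie inside a prescribed small neighbourhood, while the remaining finitely many rows contribute only finitely many arcs attached to the bone. (This is, in any case, already recorded in \cite{BanakhNowak}, where $M$ is treated as a Peano continuum.)

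Next I would show that the top row $M_1$ is a free arc. Since $n_1=\lfloor\log_2\log_2 2\rfloor=0$, we have $\varphi_{n_1}=\varphi$, so $M_1=\{(t,\varphi(t))\colon t\in[0,1]\}$ is the tent joining $(0,0)$, $(\tfrac12,\tfrac12)$ and $(1,0)$, and $t\mapsto(t,\varphi(t))$ is a continuous injection of $[0,1]$, so $M_1$ is indeed an arc. The crux is the elementary estimate
\begin{equation*}
\varphi_n(t)=2^{-n}\varphi(2^nt)\le\varphi(t)\qquad\text{for all }t\in[0,1]\text{ and all }n\ge0,
\end{equation*}
which follows from $\varphi(s)\le s$ for $s\ge 0$ (because $\varphi\le\tfrac12$ everywhere and $\varphi(s)=s$ on $[0,\tfrac12]$): for $t\in[0,\tfrac12]$ one gets $\varphi_n(t)=2^{-n}\varphi(2^nt)\le 2^{-n}2^nt=t=\varphi(t)$, and the case $t\in[\tfrac12,1]$ follows since both $\varphi$ and $\varphi_n$ are symmetric about $t=\tfrac12$ on $[0,1]$. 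Consequently, for every $k\ge2$ and every $t\in(0,1)$ we obtain $\tfrac1k\varphi_{n_k}(t)\le\tfrac1k\varphi(t)<\varphi(t)$, and of course the bone $I$ has ordinate $0<\varphi(t)$ there. Hence $M_k\cap M_1=I\cap M_1=\{(0,0),(1,0)\}$ for all $k\ge2$; since $I\cup\bigcup_{k\ge2}M_k$ is closed (by the same reasoning that shows $M$ is closed), it follows that $\overline{M\setminus M_1}=I\cup\bigcup_{k\ge2}M_k$, and therefore $M_1\cap\overline{M\setminus M_1}=\{(0,0),(1,0)\}$, i.e. $M_1$ is a free arc in the two-point sense of Proposition \ref{prop:1}.

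Finally I would invoke Proposition \ref{prop:1} with $P=M$ and $L=M_1$ to conclude that the shark teeth is a topological fractal; in applying it one takes one of the two pieces to be the singleton endpoint of $M_1$ and the other to be $I\cup\bigcup_{k\ge2}M_k$ (a Peano continuum, parametrised by a loop based at the relevant endpoint of $M_1$), which is precisely the configuration handled there. I do not anticipate any real obstacle: the only step needing genuine care is the verification that $M_1$ is free — that nothing in $M$ accumulates onto the open arc $M_1$ — and this has been reduced to the one-line height comparison above, the rest being an immediate appeal to Proposition \ref{prop:1}.
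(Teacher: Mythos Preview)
Your proposal is correct and follows exactly the paper's route: the corollary is deduced by applying Proposition~\ref{prop:1} once one observes that the shark teeth is a Peano continuum containing a free arc, and you supply the explicit verification (with $L=M_1$) that the paper omits. One small slip in your final paragraph: since $I\cup\bigcup_{k\ge2}M_k$ contains \emph{both} endpoints of $M_1$, it does not satisfy $P_2\cap L=\{\rho(1)\}$ as in the proposition's proof; the cleanest fix is to take $L$ to be one half of the tent $M_1$, so that the two pieces are the other half of $M_1$ and $I\cup\bigcup_{k\ge2}M_k$, each meeting $L$ in a single endpoint.
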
 
This completes the proof for this counterexample.
\end{proof}

Finally, we state that the remaining connection (weak IFS-attractor $\Rightarrow$ topological fractal) cannot be turned back.
\begin{ct}
There exists a Peano continuum $P$ in $\R^2$ which is not the attractor of any weak IFS but it is a Banach fractal.  
\end{ct}

\begin{proof} 
To provide an appropriate definition for the space $P$, once again let us switch into standard polar coordinates in $\R^2$. Indeed, for $n\geq 1$, let us define $\rho_0=(0,0)$, as well as $\rho_n=(2^{-n},2^{-n})$. For any $n\geq 1$, choose a piecewise linear arc $L_n$ (consisting of finitely many line segments) without self-intersections, which starts at $\rho_0$, ends at $\rho_n$, has a total length equal to $2^n$, and is contained in the set $\big[[0,2^{-n})\times(2^{-n}-2^{-n-2},2^{-n}+2^{-n-2})\big]\cup\{\rho_n\}$. Thus, let us define $P=\bigcup_{i=1}^\infty L_i$.
\begin{figure}[h]
\centering
\includegraphics[width=250px]{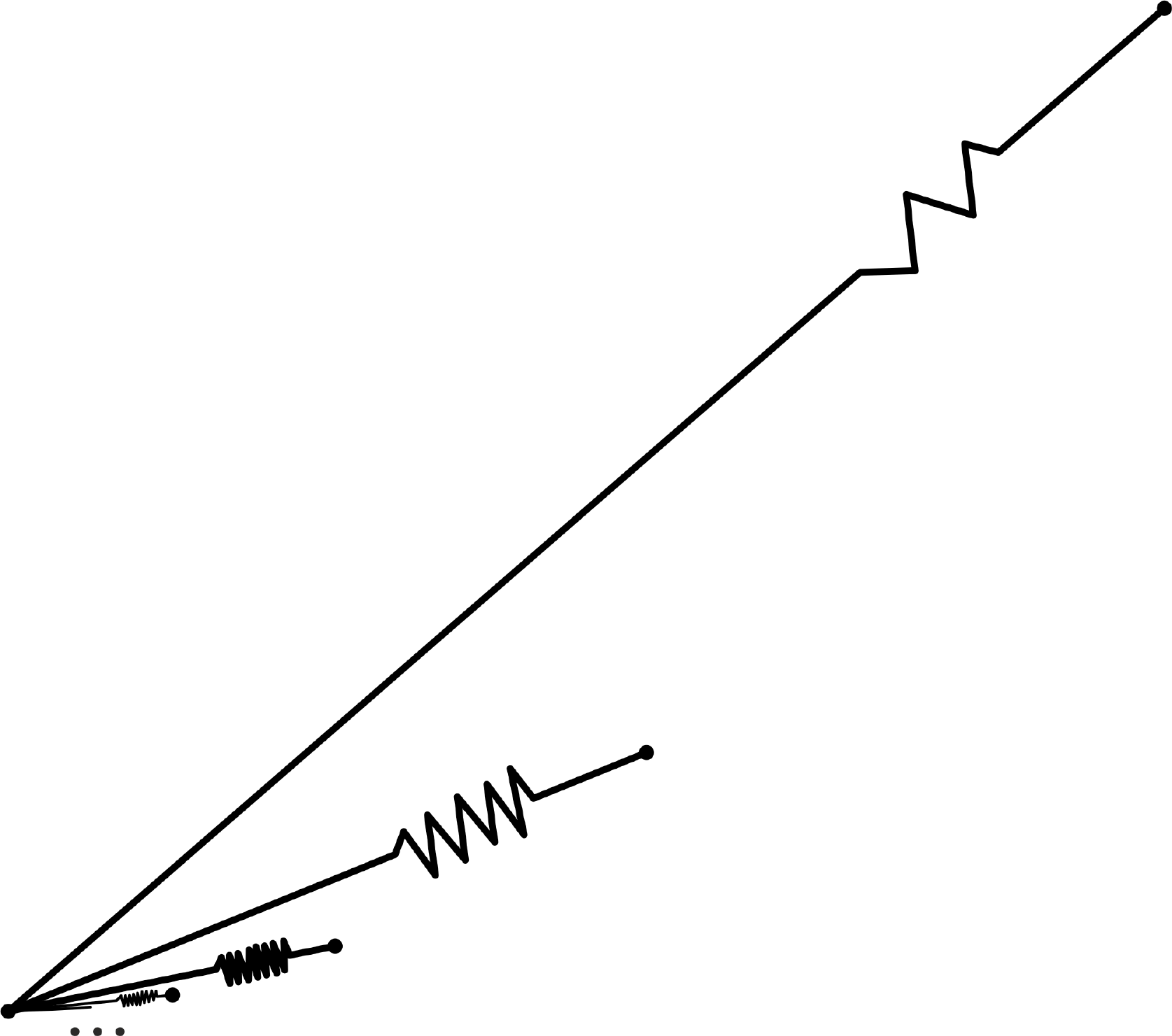}
\caption{The dendrite.}
\end{figure}
In \cite{KN}, it was proved that such space cannot be a weak IFS-attractor, though it is easy to show that it is a Banach fractal. In fact, we can topologically transform the space $P$ in a way such that every arc $L_n$ is a straight line segment whose length is equal to $2^{-n}$.   
\end{proof}

\subsection{Non-topological fractals}\label{sub:2}
So far, we do not know any Peano continuum which is not a topological fractal, though we do know some of such examples in the context of countable compact spaces (also called scattered spaces). 

Let us recall some basic notions related to scattered spaces.
A topological space $X$ is said to be {\em scattered} if every non-empty subspace $Y$ has an isolated point which lies in $Y$. It is well known the fact that any compact metric space is scattered if and only if it is countable. Moreover, every compact scattered space is zero-dimensional, namely, it has a base consisting of clopen sets. \newline
Further, for a scattered space $X$ let
$$X'=\{x\in X \colon x\text{ is an accumulation point of } X\}.$$
be the Cantor-Bendixson derivative of $X$. Inductively we could define
\begin{itemize}
\item $X^{(\alpha+1)}=(X^{(\alpha)})'$;
\item $X^{(\alpha)}=\bigcap_{\beta<\alpha}X^{(\beta)}$, for a limit ordinal $\alpha$.
\end{itemize}
Moreover, the \emph{height} of a scattered space $X$ is given by $$h(X) = \min\{\alpha \colon X^{(\alpha)} \text{ is discrete}\}.$$

\begin{ejem}
There exists countable space which is not a topological fractal.
\end{ejem}

\begin{proof}
In \cite{scattered}, it was proved that every compact scattered metric space with the limit height is not a topological IFS-attractor. Due to that fact, it is enough to construct a space with the limit height, which becomes a standard construction. 

In fact, let us consider a convergent sequence in the real line, whose height is equal to $1$. Thus, we can equate it with the space $\omega +1$, with the order topology. If for each point in this space we take a sequence which converges to such a point, then we obtain a space $\omega^2+1$. If we do the same on that space, then $\omega^3+1$ holds. We will follow that construction, so we can get any space $\omega^n+1$, for every $n\in\N^+$. The height of such spaces is $n$ (a successor ordinal).

Let us construct a space $K\subset[0,1]$, which becomes equivalent to $\omega^\omega +1$ for a limit height. It consists of countable many disjoint blocks which converge to $0$. Their heights are $h_n\lto\omega$ as $n\lto\infty$, so the whole space has a limit height equal to $\omega$.
\begin{figure}[h]
\centering
\includegraphics[scale=0.2]{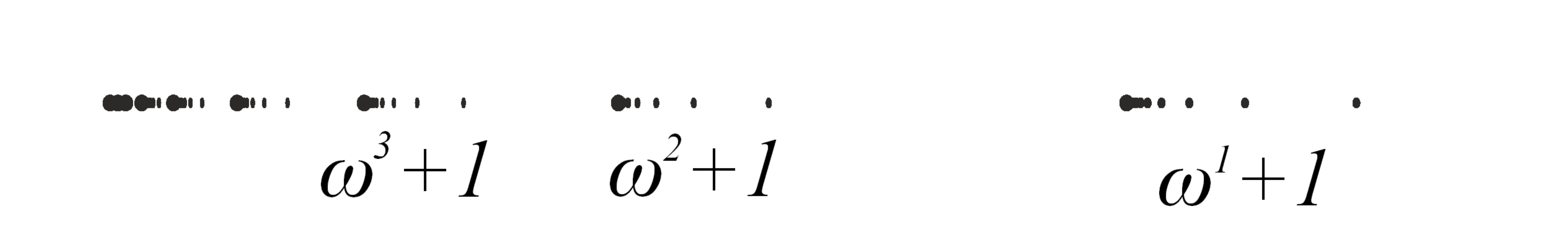}
\caption{An space equivalent to $\omega^\omega +1$.}
\end{figure}
According to \cite[Theorem 3]{scattered}, the space $K$ is not a topological fractal. 
\end{proof}

It turns out that countable compact spaces with the limit height are the only non-topological fractals among zero-dimensional spaces. Moreover, in zero-dimensional sets both notions of Banach and topological fractals are equivalent. This was shown in \cite{BNS}.

\subsection*{Acknowledgements}
The authors would like to thank Taras Banakh for several fruitful discussions and useful hints.

% Authors are advised to submit their bibtex database files. They are
% requested to list a bibtex style file in the manuscript if they do
% not want to use model1-num-names.bst.

% References without bibTeX database:

% \begin{thebibliography}{00}
%
%% \bibitem must have the following form:
%%   \bibitem{key}...
%%
%
% \bibitem{}
%
% \end{thebibliography}


\begin{thebibliography}{99}


\bibitem{MSG12} F.G. Arenas and M.A. Sa\'nchez-Granero, 
\emph{A characterization of self-similar symbolic spaces,}
Mediterr. J. Math. 9 (4) (2012) 709--728.

\bibitem{BKNNS} T. Banakh, W. Kubi\'s, N. Novosad, M. Nowak, and F. Strobin, 
\emph{Contractive function systems, their attractors and metrization,}
Topol. Methods Nonlinear Anal., to appear. arXiv:1405.6289v1 [math.GN].

\bibitem{BanakhNowak} T. Banakh and M. Nowak, 
\emph{A 1-dimensional Peano continuum which is not an IFS attractor,}
Proc. Amer. Math. Soc. 141 (3) (2013) 931--935.

\bibitem{BNS} T. Banakh, M. Nowak, and F. Strobin, 
\emph{Detecting topological and Banach fractals among zero-dimensional spaces,}
Topology Appl. 196 A (2015) 22--30.

\bibitem{BT} T. Banakh and M. Tuncali, 
\emph{Controlled Hahn-Mazurkiewicz Theorem and some new dimension functions of Peano continua,}
Topology Appl. 154 (7) (2007) 1286--1297.

\bibitem{Barnsley} M.~Barnsley, {\em Fractals everywhere}, Academic Press, Boston, 1988.

\bibitem{Dumitru} D. Dymitru, 
\emph{Attractors of topological iterated function system,}
Annals of Spiru Haret University: Mathematics-Informatics series 8 (2) (2012) 11--16.

\bibitem{Edelstein} M. Edelstein, 
\emph{On fixed and periodic points under contractive mappings,}
J. Lond. Math. Soc. s1-37 (1) (1962) 74--79.

\bibitem{DIM1} M. Fern\'andez-Mart\'{\i}nez and M.A. S\'anchez-Granero,
\emph{Fractal dimension for fractal structures,}
Topology Appl. 163 (2014) 93--111.

\bibitem{DIM3} M. Fern\'andez-Mart\'{\i}nez and M.A. S\'anchez-Granero,
\emph{Fractal dimension for fractal structures: A Hausdorff approach,}
Topology Appl. 159 (7) (2012) 1825--1837.

\bibitem{NDY15} M. Fern\'andez-Mart\'{\i}nez, M.A. S\'anchez-Granero, J.E. Trinidad Segovia, and J.A. Vera, 
\emph{A new topological indicator for chaos in mechanical systems,}
Nonlinear Dynamics, June (2015) 1--13. doi: 10.1007/s11071-015-2207-x

\bibitem{KN} M. Kulczycki and M. Nowak, 
\emph{A class of continua that are not attractors of any IFS,}
Cent. Eur. J. Math. 10 (6) (2012) 2073--2076.

\bibitem{MAN77} B.B. Mandelbrot, 
\emph{Fractals: Form, Chance and Dimension,}
W.H. Freeman \& Company, San Francisco, 1977.

\bibitem{MAN82} B.B. Mandelbrot, 
\emph{The Fractal Geometry of Nature,}
W.H. Freeman \& Company, New York, 1982.

\bibitem{mihail} A.~Mihail {\em A topological version of iterated function systems}, An. \c Stiint.Univ. Al. I. Cuza, Ia\c si, (S.N.), Matematic\u a, LVIII, (2012), f.1, 105--120. 

\bibitem{scattered} M. Nowak, 
\emph{Topological classification of scattered IFS-attractors,}
Topology Appl. 160 (14) (2013) 1889--1901.

\bibitem{NS} M. Nowak and T. Szarek, 
\emph{The shark teeth is a topological IFS-attractor,}  
Siberian Mathematical Journal 55 (2) (2014) 296--300.

\bibitem{MOR46} P.A.P. Moran, 
\emph{Additive functions of intervals and Hausdorff measure,}
Proc. Camb. Phil. Soc. 42 (1) (1946) 15--23.

\bibitem{ROB95} C. Robinson,
\emph{Dynamical Systems: Stability, Symbolic Dynamics and Chaos}, CRC Press Inc., Boca Raton (FL), 1995.

\bibitem{Sanders} M.J. Sanders, 
\emph{Non-attractors of iterated function systems,}
Texas Project NexT e-Journal 1 (2003) 1--9.

\bibitem{TAR15} M. Tarnopolski, 
\emph{Correlations between Hurst Exponent and maximal Lyapunov Exponent for some low-dimensional discrete conservative dynamical systems,}
preprint, arXiv:1501.03766v1 [nlin.CD].

\end{thebibliography}
\end{document}